\newtheorem{thm}{Theorem}[section]
 \newtheorem{lem}[thm]{Lemma}
 \newtheorem{prop}[thm]{Proposition}
 \theoremstyle{definition}
 \newtheorem{defn}{Definition}[section]
 \theoremstyle{remark}
 \newtheorem{rem}{Remark}[section]
 \numberwithin{equation}{section}
\DeclareMathOperator{\im}{Im}
\DeclareMathOperator{\Bd}{bd}
\def\RR{\mathbb R}
\def\CC{\mathbb C}
\def\NN{\mathbb N}
\def\ZZ{\mathbb Z}
\def\BB{\mathbb B}
\def\AAA{\mathbb{A}}
\def\ii{\mathrm{i}}
\def\wt{\widetilde}
\def\de{\delta}
\def\ep{\epsilon}
\def\Ga{\Gamma}
\def\De{\Delta}
\def\Si{\Sigma}
\def\si{\sigma}
\def\N{\mathcal{N}}
\def\W{W}
\def\V{V}
\def\G{\mathcal{G}}
\def\Gfr{\mathfrak{G}}
\def\cyc{\mathfrak{c}}
\def\id{\mathfrak{e}}
\begin{document}
\title{Generic asymptotics of resonance counting function for 
Schrödinger point interactions}
\author{}
\date{}
\maketitle

{\center 
{\large 
Sergio Albeverio$^{\text{\;a}}$ and Illya M. Karabash$^{\text{\;b,c,*}}$
\\[4ex]
}}

\begin{minipage}[t]{0.25\textwidth}
\quad
\end{minipage}
\begin{minipage}[t]{0.7\textwidth}
\it This work is dedicated to the dear Memory of Boris Pavlov. 
The first author had the great experience to meet him personally at 
a conference in Dubna back in 1987. At that time Boris was developing his original approach to 
point interactions and was the leader of a very strong group of young enthusiastic mathematicians
working in this area. From then on our steady friendship developed, with him and his coworkers.
The authors are very grateful to Boris for the many insights he has provided, 
that also influenced much of our work. We deeply mourn his departure.
\end{minipage}                                                                       

\vspace{4ex}

{\small \noindent
$^{\text{a}}$  Institute for Applied Mathematics, Rheinische Friedrich-Wilhelms Universität Bonn,
and Hausdorff Center for Mathematics, Endenicher Allee 60,
D-53115 Bonn, Germany\\[1mm]
$^{\text{b}}$
Mathematical Institute, Rheinische Friedrich-Wilhelms Universität Bonn,
 Endenicher Allee 60, D-53115 Bonn, Germany \\[1mm]
 $^{\text{c}}$ Institute of Applied Mathematics and Mechanics of NAS of Ukraine,
Dobrovolskogo st. 1, Slovyans'k 84100, Ukraine\\[1mm]
$^{\text{*}}$ Corresponding author: i.m.karabash@gmail.com\\[2mm]
E-mails: 
albeverio@iam.uni-bonn.de, i.m.karabash@gmail.com
}

\begin{abstract}
We study the leading coefficient in the asymptotic formula 
$ \N (R) = \frac{W}{\pi} R + O (1)  $, $R\to \infty$,  for the resonance counting function 
$\N (R)$ of Schrödinger Hamiltonians with point interactions. 
For such Hamiltonians, the Weyl-type and non-Weyl-type asymptotics of $\N (R)$ was introduced 
recently in a paper by J. Lipovský and V. Lotoreichik (2017).
In the present paper, we prove that the Weyl-type asymptotics is generic.
\end{abstract}

{
\small \noindent
MSC-classes: 
35J10, 
35B34, 
35P20, 
05C90, 
81Q37, 
81Q80 
\\[2mm]
Keywords: asymptotics of resonances, delta-interaction, Weyl-type asymptotics, 
directed graph, multigraph, exponential polynomial, quasi-normal-eigenvalue

\tableofcontents

}

\section{Introduction}
\label{s:intro}

The asymptotics as $R \to \infty$ of the counting function $\N_{H_{a,Y}} (R)$ for the resonances of 
a `one particle, finitely many centers' Schrödinger Hamiltonian $H_{a,Y}$ acting 
in the complex Lebesgue space $L^2 (\RR^3)$ and 
associated with the formal differential expression 
\begin{equation}\label{e:H}
-\De u (x) + `` \sum_{j=1}^N \mu (a_j) \de (x - y_j) u (x)  ``, \quad x=(x^1,x^2,x^3) \in \RR^3 , \ N \in \NN, 
\end{equation}
have being studied recently in \cite{LL17}, 
where the existence of Weyl-type and non-Weyl type 
asymptotics of $\N_{H_{a,Y}} (\cdot)$ have been proved and similarities 
with the case of quantum graphs \cite{DEL10,DP12} have been noticed.
The goal of the present paper is to show that 
the case of Weyl-type asymptotics of $\N_{H_{a,Y}} (\cdot)$ is generic for operators of the form 
(\ref{e:H}).

We denote by $\De$ the self-adjoint Laplacian in $L^2 (\RR^3)$ and assume throughout 
the paper that $N \ge 2$, where $N$ is the number of point interaction 
\emph{centers} $y_j \in \RR^3$, which are assumed to be distinct, i.e., $y_m \neq y_j$ if $m \neq j$.
The $N$-tuple of centers $(y_j)_{j=1}^N \subset (\RR^3)^N$ is denoted by $Y$.
The numbers $a_j \in \CC$ are the 'strength' parameters for the point interactions forming a tuple 
$a=(a_j)_{j=1}^N \in \CC^N$.

Roughly speaking, point interactions correspond to potentials expressed by the 
Dirac measures $\delta (\cdot - y_j)$ and play
the role of potentials in formula (\ref{e:H}) (this can be taken as definition in the 1-D case 
of Sturm-Liouville differential operators). Rigorously, in 3-D case, the point interaction Hamiltonian $H_{a,Y}$ associated with (\ref{e:H})
can be  introduced as a densely defined closed operator in the Hilbert space 
$L^2 (\RR^3)$ via a Krein-type  formula for 
the difference $(H_{a,Y} - z^2 )^{-1} - (-\De - z^2 )^{-1} $ of the perturbed and unperturbed resolvents of operators $H_{a,Y}$ and $-\De$, respectively. 
For the definition of $H_{a,Y}$ and for the meaning of the `strength' parameters and the factors $\mu (a_j) $ 
in (\ref{e:H}), we refer to 
\cite{AFH79,AGH82,AGHH12,AK00} in the case $a_j \in \RR$, and to 
$\cite{AGHS83,AK17}$ in the case $a_j \not \in \RR$ (see also Section \ref{s:Def}). 
Note that, in the case $(a_j)_{j=1}^N \subset \RR^N$, the operator $H_{a,Y} $ 
is self-adjoint in $L^2 (\RR^3)$; and 
in the case $(a_j)_{j=1}^N \subset (\overline{\CC}_-)^N $, 
$H_{a,Y}$ is closed and maximal dissipative 
(in the sense of \cite{E12}, or in the sense that $\ii H_{a,Y}$ is maximal accretive \cite{K13}).

Eigenvalues and (continuation) resonances $k$ of the corresponding operator $H_{a,Y}$ 
are connected with the special $N\times N$-matrix $\Ga (z)  $,
which is a function of  the spectral parameter $z$ and depends also on $Y$ and $a$. 
The matrix-function $\Ga (\cdot) $ appears naturally as a part of the expression for 
$(H_{a,Y} - z^2 )^{-1} - (-\De - z^2 )^{-1} $,
see \cite{AGHH12,AH84,AK17} and Section \ref{s:Def}.
The set $\Si (H_{a,Y})$ of resonances  associated with 
$H_{a,Y}$ is defined as the set of 
zeroes $k$ of the determinant 
$
\det \Ga (\cdot)  
$,
which is an analytic of $z$ function.

This definition follows the logic of \cite{DZ17,DEL10,LL17} and 
slightly differs from that of the original definition 
\cite{AGHH12,AH84} since it includes in the set of resonances 
the zeroes $k \in \CC_+ :=\{ z \in \CC : \im z>0 \}$, which correspond to eigenvalues $k^2$ of $H_Y$. 
It is easy to see  \cite{AGHH12,AH84,AK17} that $H_{a,Y}$ has only a finite number of eigenvalues and so 
the inclusion of corresponding $k \in \CC_+$ does not essentially 
influence the asymptotics for $R \to \infty$ 
of the counting function $\N_{H_{a,Y}} (\cdot)$, which is defined by 
\[
\N_{H_{a,Y}} (R) := \# \{ k \in \Si (H_{a,Y}) : |k|<R \} .
\]
Here $\#E$ is the number of elements of a multiset $E$.

When the number of resonances in a certain domain is counted,
$\Si (H_{a,Y})$ has to be understood as a multiset, i.e., an unordered set in which 
an element $e$ can be repeated a finite number $m_e \in \NN$ of times 
(this number $m_e$ is called the multiplicity of $e$). 
The multiplicity of a resonance $k$ is, by definition, its multiplicity as a 
zero of $\det \Ga  (\cdot)$,
and it is always finite since the resolvent set of $H_{a,Y}$ is nonempty (see \cite{AGHH12,AK17}).
The definition of the resonance counting function takes this multiplicity into account.

The study of the counting function $\N_{-\De+V} (\cdot)$ for scattering poles 
of Schrödinger Hamiltonians $-\De+V$ in 
$L^2 (\RR^n)$ 
with odd $n \ge 3$ was initiated in 
\cite{M83} (for the relation between the notions of scattering poles and resonances, 
see \cite{DZ17}). This study was continued and extended to obstacle and geometric scattering in a number of papers 
(see e.g. \cite{Z89DMJ,F98,CH05,CH08,DZ17,Z17} 
and references therein). 
In particular, it was proved in \cite{CH05,CH08} that for odd $n \ge 3$ the formula 
\begin{equation} \label{e:limsup}
\limsup_{R \to \infty} \frac{\log \N_{-\De+V} (R)}{\log R} = n 
\end{equation}
is generic for compactly supported $L^\infty$-potentials $V$. Generally, in such settings, 
only the bound $\limsup_{R \to \infty} \frac{\log \N_{-\De+V} (R)}{\log R} \le n$ is proved \cite{Z89DMJ}
(see the discussion of a related 
open problem in \cite{CH05,Z17}).

During the last two decades, wave equations, resonances, and related optimization problems 
on structures with combinatorial geometry and graph theory backgrounds have attracted a 
substantial attention, in particular, due to their engineering applications, 
see monographs \cite{AGHH12,AK00,BK13,P12}, papers 
\cite{AK17,DEL10,DP12,DK07,EFH07,HL17,Ka13_KN,Ka14} 
and references therein. 
One of the earliest studies of scattering on graphs was done by Gerasimenko and Pavlov \cite{GP88}.

In \cite{DP12,DEL10}, the asymptotics $\N_{\Gfr} (R) = \frac{2 \W_\Gfr}{\pi} R + O (1)$ as $R\to\infty$ 
for the resonance counting function of a non-compact quantum graph $\Gfr$ have been obtained
and it was shown that the nonnegative constant $\W_\Gfr$, which was called 
\emph{the effective size of the graph}, is less or equal to the sum of lengths of 
the internal edges of the graph.
It was said that the quantum graph has a Weyl resonance asymptotics
if $\W_\Gfr$ equals the sum of lengths of internal edges.
Special attention was paid in \cite{DEL10,DP12} 
to the cases where non-Weyl  asymptotics holds, i.e., to the cases where 
$\W_\Gfr$ is strictly less than the sum of lengths of internal edges.

In the recent paper \cite{LL17}, it was noticed  that the resonance theories for point interactions and for 
quantum graphs have a lot in common, and
the asymptotics 
\begin{gather} \label{e:NGas}
\N_{H_{a,Y}} (R) = \frac{\W (H_{a,Y})}{\pi} R + O (1)  \text{ as $R\to \infty$}
\end{gather}
was established for point interaction Hamiltonians $H_{a,Y}$ 
with a certain positive constant $\W (H_{a,Y})$, which is called \emph{an effective size of the set} $Y$.
(Note that (\ref{e:NGas}) holds 
for the case $N\ge 2$; in the simple case $N=1$ it is obvious that only one resonance exists.) 
On the other hand, \emph{the size of the family of centers}  Y was defined by  
\[
\V (Y) :=  \max_{\si \in S_N} \sum_{j=1}^N | y_{j} -  y_{\si (j)}| ,
\]
where the maximum is taken over all permutations $\si $ in the symmetric group $S_N$.
The asymptotics of $\N_{H_{a,Y}} (R)$ for $R \to \infty$ was called  of \emph{Weyl-type if 
the effective size $\W (H_{a,Y})$ in (\ref{e:NGas}) coincides with the size $\V (Y)$.} An example of $H_{a,Y}$ with 
non-Weyl-type asymptotics was constructed in \cite{LL17}. (Note that, 
while \cite{LL17} considers only the case where all $a_j$ coincide and are 
real, the results and proofs of \cite{LL17} can be extended to the case of arbitrary 
$a \in \CC^N$ almost without any changes.)

The present paper studies how often the equality $\W (H_{a,Y})=\V (Y)$, i.e., 
Weyl-type 
asymptotics, happens.
To parametrize rigorously the family of Hamiltonians $H_{a,Y}$,
let us consider $Y$ as a vector in the space $(\RR^3)^N$ of ordered $N$-tuples $y=(y_j)_{j=1}^{N}$
with the entries $y_j \in \RR^3$. We consider $(\RR^3)^N$ as a linear normed space 
with the $\ell^2$-norm $| y |_2 = (\sum |y_j|^2)^{1/2}$.
Then the ordered collection $Y$ of  centers is identified with 
an element of the subset  $\AAA \subset (\RR^3)^N$ defined by 
\[
\AAA := \{ y \in (\RR^3)^N \ : \ y_j \neq y_{j'} \ \text{ for } \ j \neq j' \} .
\]
We consider $\AAA$ as a metric space with the distance function induced by the norm $| \cdot |_2$.

The following theorem is the main result of the present paper.
It shows that Weyl-type asymptotics is generic for point interaction Hamiltonians and gives 
a precise sense to this statement.
  
\begin{thm} \label{t:main}
There exists a subset $\AAA_1 \subset \AAA$ that is open and dense in the metric space $\AAA$ and has 
also the property that, for every $Y \in \AAA_1$ and every $a \in \CC^N$, 
the counting function for the resonances of $H_{a,Y}$ has the Weyl-type asymptotics
$\N_{H_{a,Y}} (R) = \frac{\V (Y)}{\pi} R + O (1)$  as $R\to \infty$.
\end{thm}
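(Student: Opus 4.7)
My plan is to exploit the explicit structure of $\det\Ga(z)$ as an exponential polynomial and to reduce the theorem to a genericity statement about the Euclidean-length functional $L(\sigma):=\sum_{j=1}^N |y_j - y_{\sigma(j)}|$ on $S_N$. Using the Leibniz formula and the standard form of $\Ga$ (off-diagonal entries $\Ga_{jm}(z)=-e^{\ii z|y_j-y_m|}/(4\pi|y_j-y_m|)$ for $j\neq m$, and diagonal entries $\Ga_{jj}(z)$ that are first-degree polynomials in $z$ depending on $a_j$), one obtains
\[
\det\Ga(z) \;=\; \sum_{\sigma\in S_N} \operatorname{sgn}(\sigma)\, c_\sigma(z)\, e^{\ii z L(\sigma)},
\]
where $c_\sigma(z)$ is a polynomial in $z$ that is nonzero for every $a\in\CC^N$: it is the product, over non-fixed points $j$, of the nonzero constants $-(4\pi|y_j-y_{\sigma(j)}|)^{-1}$, times the product, over fixed points, of the affine polynomials $\Ga_{jj}(z)$.

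I then introduce the equivalence relation $\sigma\sim\tau$ meaning that $\sigma$ and $\tau$ produce the same multiset of unordered pairs $\{\{j,\sigma(j)\}\}$; equivalently, $\tau$ is obtained from $\sigma$ by reversing a subset of its cycles of length $\ge 3$. The three quantities $L$, $\operatorname{sgn}$ and $c_\sigma$ are constant on each class $[\sigma]$, so grouping terms gives $\det\Ga(z)=\sum_{[\sigma]} 2^{k(\sigma)}\operatorname{sgn}(\sigma)\, c_\sigma(z)\, e^{\ii z L(\sigma)}$, with $k(\sigma)$ the number of long cycles. Define
\[
\AAA_1 := \bigl\{Y\in\AAA : \V(Y)=\max_{\sigma\in S_N} L(\sigma)\text{ is attained on a single equivalence class}\bigr\}.
\]
For $Y\in\AAA_1$, the coefficient of $e^{\ii z\V(Y)}$ in $\det\Ga(z)$ is a single nonzero polynomial (no cancellation between classes occurs), while the coefficient of $e^{0}$ equals $\prod_{j}\Ga_{jj}(z)\not\equiv 0$, since among permutations with $L(\sigma)=0$ only $\sigma=\id$ remains when the centers are distinct. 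The classical asymptotic count for zeros of exponential polynomials — the same tool underlying the $O(1)$ remainder established in \cite{LL17} — then yields $\N_{H_{a,Y}}(R)=\frac{\V(Y)}{\pi}R+O(1)$ for \emph{every} $a\in\CC^N$.

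Openness of $\AAA_1$ is automatic: it is the union, over classes $[\sigma^*]$, of the sets cut out by the finitely many strict continuous inequalities $L(\sigma^*)>L(\tau)$ with $\tau\notin[\sigma^*]$. For density, one must show that whenever $\sigma\not\sim\tau$ the function $Y\mapsto L(\sigma)-L(\tau)$ is not identically zero on $\AAA$; granting this, the complement of $\AAA_1$ is a finite union of closed nowhere-dense subsets of $\AAA$, hence $\AAA_1$ is dense.

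The technical heart of the proof is precisely this non-vanishing claim: the multiset $\{|y_j-y_{\sigma(j)}|\}_{j=1}^N$ of edge lengths determines $\sigma$ up to the equivalence $\sim$. I expect to settle it by sending one center $y_{j_0}$ to infinity along a generic direction $e\in\RR^3$: the expansion $|y_{j_0}-y_m|=|y_{j_0}|-\langle e,y_m\rangle+o(1)$ shows that $L(\sigma)-L(\tau)$ then acquires a leading asymptotic term controlled by which edges of $\sigma$ and $\tau$ are incident to $j_0$ and which other centers those edges hit. A combinatorial case analysis, based on the cycle-reversal description of $\sim$, should show that non-equivalent $\sigma,\tau$ cannot agree on this local asymptotic at every vertex; carrying out this combinatorial verification cleanly is the main obstacle I anticipate.
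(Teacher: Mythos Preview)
Your strategy coincides with the paper's: Leibniz expansion of $\det\Ga_{a,Y}(z)$, the edge-equivalence relation (which the paper denotes $\cong$ and characterises exactly as reversal of cycles), invariance of the sign, of $L$, and of the polynomial factor on each equivalence class, and the conclusion that the top frequency $\V(Y)$ survives whenever the maximum of $L$ is attained on a single class (this is the paper's Proposition~3.1). Your $\AAA_1$ is slightly larger than the paper's, which requires \emph{all} non-equivalent classes to have pairwise distinct $L$-values rather than only the maximal one to be isolated; either choice works, and yours is the sharper set for the stated theorem.

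The step you correctly isolate as the crux --- that $L(\sigma)-L(\tau)\not\equiv 0$ on $\AAA$ whenever $\sigma\not\sim\tau$ --- is exactly the paper's Theorem~3.2, proved there by an explicit six-case perturbation analysis at a vertex where the undirected graphs $\G_\sigma$ and $\G_\tau$ differ. Your asymptotic route is a valid alternative and requires less case-splitting than you anticipate: choose $j_0$ with $\{\sigma(j_0),\sigma^{-1}(j_0)\}\neq\{\tau(j_0),\tau^{-1}(j_0)\}$ as multisets; if exactly one of $\sigma,\tau$ fixes $j_0$, then $L(\sigma)-L(\tau)$ diverges linearly as $|y_{j_0}|\to\infty$; if neither does, then with $y_{j_0}=te$ and the remaining centres fixed one has
\[
L(\sigma)-L(\tau)\ \longrightarrow\ -\bigl\langle e,\ y_{\sigma(j_0)}+y_{\sigma^{-1}(j_0)}-y_{\tau(j_0)}-y_{\tau^{-1}(j_0)}\bigr\rangle + C
\]
as $t\to\infty$, where $C$ is independent of $e$. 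A generic placement of the other centres makes the bracketed vector nonzero (the two index multisets differ), and then a generic direction $e$ gives a nonzero limit. So the ``main obstacle'' dissolves into two short cases, after which your argument is complete and runs parallel to the paper's; the paper's hands-on perturbations and your asymptotic expansion are simply two ways of exhibiting a single $Y$ with $L(\sigma)\neq L(\tau)$, which together with real-analyticity of $L(\sigma)-L(\tau)$ on the connected set $\AAA$ yields the required nowhere-density.
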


The proof is constructive and is given in Section \ref{ss:ProofMain}.

\textbf{Notation}. 
The following standard sets are used: the lower 
and upper 
complex half-planes 
$\CC_\pm = \{ z \in \CC : \pm \im z > 0  \}$,
the set $\ZZ$ of integers, 
the closure  $\bar{S}$ of a subset of a normed space $U$, in particular,
$\overline{\CC}_\pm = \{ z \in \CC : \pm \im z \ge 0  \}$, 
open balls 
$
\BB_\epsilon (u_0) =\BB_\epsilon (u_0; U) 
:= \{u \in U \, : \, \rho_U (u, u_0) < \epsilon \}
$
in a metric space $U$ with the distance function $\rho_U (\cdot,\cdot)$ (or in a normed space).

By $y_j \sim y_m$ we denote an edge between vertices $y_j$ and $y_m$ in an undirected 
graph $\G$ (so $y_j \sim y_m$ and $y_m \sim y_j$ is the same edge).
Directed edges in a directed graph $\overrightarrow{\G}$ will be called bonds 
in accordance with \cite{BK13} and denoted by $y_j \leadsto y_m$, which means that the bond is from 
$y_j $ to $y_m$ (note that this notation is slightly different from that of \cite{BK13}).

\section{Resonances as zeroes of a characteristic determinant}
\label{s:Def}

Let the set $Y=\{ y_j \}_{j =1}^N$ consist of $N\ge 2$ distinct points 
$y_1$, \dots, $y_N$ in $\RR^3$. Let $a = (a_j)_{j=1}^N \in \CC^N$ be the $N$-tuple  of 
the strength parameters.
The operator $H$ associated with (\ref{e:H}) is defined in \cite{AFH79,AGH82,AGHH12} for the case 
of real $a_j$, and in \cite{AK17} for $a_j \in \CC$.
It is a closed operator in the complex Hilbert space $L^2 (\RR^3)$ 
 and it has a nonempty resolvent set.
The spectrum of $H$ consists of the essential spectrum $[0,+\infty)$ and an 
at most finite set 
of points outside of $[0,+\infty)$ \cite{AK17} (all of those points are eigenvalues).

The resolvent $(H-z^2)^{-1}$ of $H$ is defined in the classical sense 
on the set of $ z \in \CC_+ $ such that $z^2$ is not in the spectrum, 
and has the integral kernel 
\begin{gather} \label{e:Res}
(H-z^2)^{-1} (x,x')  = G_z (x-x') + \sum_{j,j' = 1}^N G_z (x-y_j)
\left[ \Ga_{a,Y} \right]_{j,j'}^{-1} G_z (x' - y_{j'} ) , 
\end{gather}
where $x,x' \in \RR^3 \setminus Y$ and $x \neq x'  $, see e.g. \cite{AGHH12,AK17}.
Here 
\[
G_z (x-x') := \frac{e^{\ii z |x-x'|}}{4 \pi |x-x'|}
\] 
is the integral kernel associated with
the resolvent $(-\De - z^2)^{-1}$ of 
the kinetic energy Hamiltonian $-\De$;
$\left[ \Ga_{a,Y} \right]_{j,j'}^{-1}$ denotes the $j,j'$-element of the inverse to 
the matrix 
\begin{gather} \label{e:Ga}
\Ga_{a,Y} (z) = \left[ \left( a_j - \tfrac{\ii z}{4 \pi} \right) \de_{jj'} 
- \wt G_z  (y_j-y_{j'})\right]_{j,j'=1}^{N}, \text{ where }
\wt G_z (x) := \left\{ \begin{array}{rr} G_z (x), & 
x \neq 0 \\
0 , & 
x = 0  \end{array} \right.  .
\end{gather}


The multi-set of (continuation) resonances $\Si (H)$ associated with the operator $H$ 
(in short, resonances of $H$) is by definition the set of zeroes of 
the determinant 
$
\det \Ga_{a,Y} (\cdot)  ,
$
which we will call the characteristic determinant.
This definition follows \cite{DZ17} and slightly differs 
from the one used in \cite{AH84,AGHH12}
because isolated eigenvalues are now 
also included into $\Si (H)$.
For the origin of this and related approaches to the understanding of resonances, we refer
to \cite{AH84,DZ17,RSIV78,V72} and the literature therein. 
The multiplicity of a resonance $k$ will be understood as the multiplicity 
of a corresponding zero of the analytic function 
$\det \Ga_{a,Y} (\cdot)$ (see \cite{AGHH12}).

 An important common feature of quantum graphs and point interaction Hamiltonians is that 
the function that is used to determine resonances as its zeroes is an exponential polynomial. That is, 
the function 
\begin{gather} \label{e:D}
D (z) := (- 4 \pi)^N \det \Ga_{a,Y} (z)
\end{gather}
has the form
\begin{equation} \label{e:CanForm}
 \sum_{j=0}^\nu P_{b_j} (z) e^{\ii b_j z} ,
\end{equation}
where $\nu \in \NN \cup \{0\}$, $b_j \in \CC$, and $P_{b_j} (\cdot)$ are polynomials. 
One can see that, for the particular case of $D(\cdot)$,
$b_j $ are real and nonnegative numbers.

In what follows, we assume that the polynomials 
$P_{b_j} (\cdot)$ in the representation (\ref{e:CanForm}) for $D (\cdot)$
are nontrivial in the sense that $P_{b_j} (\cdot) \not \equiv 0$,
and that all $b_j$ in (\ref{e:CanForm})
are distinct and ordered such that  
\[
b_0 < b_1 < \dots < b_\nu .
\] 
Under these assumptions, the set 
$\{ b_j \}_{j=0}^\nu$ and the representation  
(\ref{e:CanForm}) are unique.

It is obvious that $b_0 = 0$ and the corresponding polynomial is equal to
\[
P_0 (z)  = \prod_{j=1}^N  (\ii z - 4 \pi a_j ) .
\] 
It is easy to notice \cite{AK17} that $\nu \ge 1$, i.e., there are at least two summands in the sum 
(\ref{e:CanForm}) and at least one of them involves nonzero number $b_j $.
(Let us recall that $N \ge 2$ is assumed throughout the paper).

The asymptotic behavior of the counting function $\N_{H}$ for resonances of $H$ is given by the formula
\begin{equation} \label{e:Nas}
\N_H (R) = \frac{b_\nu}{\pi} R + O(1) \text{ as $R\to \infty$},
\end{equation}
which was derived in \cite{LL17} from \cite[Theorem 3.1]{DEL10} (for more general versions of this result 
in the context of the general theory of exponential polynomials see \cite{BG12} and references therein).
So, following the terminology of \cite{LL17}, 
\begin{equation} \label{e:bnu=W}
\text{$b_\nu$ is the effective size $W (H)$}
\end{equation}
associated with the N-tuples $Y$ and $a$ that define the Hamiltonian $H$.
 
 \begin{rem} 
 This raises  the natural question of whether there exist a 
 family of centers $Y$ such that 
 the effective size $W (H_{a,Y})$ of $H_{a,Y}$ might change with the change of 
 the `strength' tuple $a \in \CC^N$. 
 (Note that we do not allow  $a_j$ to take the value $\infty$. Otherwise, the answer becomes obvious 
since $a_j = \infty$ means that the center $y_j$ is excluded from $Y$, see \cite{AGHH12,AK17}).
 \end{rem}


\section{Absence of cancellations in Leibniz formula}

In this section, we assume that the tuple $a$ is fixed
and consider the operator $H$ and the set $\Si (H)$ of its resonances as functions 
of the family $Y$ of interaction centers. Therefore we will use the notation $H_Y$ for $H$,
and 
\begin{align} \label{e:DY}
D_Y (z) := (-4 \pi)^N \det \Ga_{a,Y} (z)
\end{align} 
for the corresponding modified version (\ref{e:D}) of 
the characteristic determinant (\ref{e:Ga}).

The Leibniz formula expands $D_Y (z)$ into the sum of terms
\begin{equation} \label{e:Dterms}
e^{\ii z \V_\si (Y)} P^{[\si,Y]} (z)
\end{equation}
taken over all permutations $\si$ in the symmetric group $S_N$,
where the constants $\V_\si (Y) \ge 0$  depends on $\si$ and $Y$, and 
$P^{[\si,Y]} (\cdot) $ are polynomials in $z$ depending on $\si$ and $Y$. They have the form 
\begin{gather} \label{e:VsiY}
\V_\si (Y) := \sum_{j=1}^N |y_j - y_{\si (j)}| = \sum_{j : \si (j) \neq j  } |y_j - y_{\si (j)}|, \\
P^{[\si,Y]} (z) := (-1)^{\ep_\si} K_1 (\si,Y) \prod_{j : \si (j) = j  } (\ii z -4\pi a_j) ,
\label{e:p si}
\end{gather}
where $K_1$ is the positive constant depending on $\si$ and $Y$,
\[
K_1 (\si,Y):= \prod_{j : \si (j) \neq j  } |y_j - y_{\si (j)}|^{-1} 
\]
($K_1 (\id,Y) := 1$ in the case where $\si$ is the identity permutation 
$ \id = [1] [2] \dots [N]$) and 
$\ep_\si$ is the permutation sign (the Levi-Civita symbol).

Here and below we use the square brackets notation of the textbook \cite{Lang02}
for permutation cycles, omitting sometimes, when it is convenient, 
the degenerate cycles consisting of one element. 
For each permutation $\si$, there exists a decomposition 
\begin{gather} \label{e:si=cyc}
\si = \prod_{m=1}^{M(\si)} \cyc_m 
\end{gather}
of $\si$ into disjoint cycles $\cyc_m$ (in short, the \emph{cycle decomposition} of $\si$). 
The changes in the order of cycles in the product 
(\ref{e:si=cyc}) does not influence the result of the product. Up to such variations of order,
the decomposition (\ref{e:si=cyc}) is unique (see e.g. \cite{Lang02}). 
So the number $M(\si)$ of cycles in (\ref{e:si=cyc}) is a well defined 
function of $\si$. It is connected with the permutation sign $ \ep_\si$ by the well-known equality
 \begin{equation} \label{e:ep=cyc}
 \ep_\si  = (-1)^{N-M(\si)} .
 \end{equation}
(To see this it is enough to conclude from the equality 
$
[1 \  2 \dots  \ n] = [1 \ 2] [2 \ 3] \dots [(n-1) \  n]
$
  that the sign of a cycle $\cyc_m$ equals to 
 $(-1)^{\# (\cyc_m) -1}$, 
 where $\# (\cyc_m)$ is the number of elements involved in the cycle $\cyc_m$.)

We will use some basic notions of graph theory that are concerned with
the directed and undirected graphs with lengths. Such graphs can be realized as metric graphs
or as weighted discrete graphs. Because of connections of the topic of this paper 
with quantum graphs (see \cite{LL17}),
we try to adapt terminology and notation close to (but not coinciding with) that of 
 the monographs \cite{BK13,P12}.

The numbers $\V_\si (Y)$ have a natural geometric description from 
the point of view of pseudo-orbits of 
directed metric graph  having 
vertices at the centers $y_j$, $j=1$, \dots, $N$ (see \cite{LL17} and references therein). 
Namely, $\V_\si (Y)$
is the \emph{metric length} (the sum of length of bonds) 
of the \emph{directed graph} $\overrightarrow{\G_\si}$ associated with $\si \in S_N$ 
and consisting, by definition,
of \emph{bonds} $y_j \leadsto y_{\si (j)}$,
$j=1$, \dots, $N$, of metric length $|y_j - y_{\si (j)}|$. 
Note that loops of zero length from a vertex to itself are allowed in $\overrightarrow{\G_\si}$.
Namely, in the case
where the cycle  decomposition of a permutation $\si$ includes a degenerate cycle $[j]$ (i.e., $j=\si (j)$),
the bond $y_j \leadsto y_{\si (j)}$ degenerates into the loop $y_j \leadsto y_j$ 
from $y_j$ to itself, 
which has zero length. If the cycle decomposition of $\si$ contains the cycle $[j \ \si(j)]$, 
then the corresponding cycle of the directed graph $\overrightarrow{\G_\si}$ consists of the two bonds 
$y_j \leadsto y_{\si (j)}$, $y_{\si(j)} \leadsto y_j$  between 
$j$ and $\si(j)$ with two opposite 
directions, and so, the contribution of this cycle to the metric length of 
$\overrightarrow{\G_\si}$ 
is $2 |y_j - y_{\si (j)}|$.

This, in particular, explains why the number
\begin{equation} \label{e:V(Y)=max}
\V (Y) = \max_{\si \in S_N} \V_\si (Y) 
\end{equation}
is called in \cite{LL17} the size of $Y$.

The coefficients $b_j$ in (\ref{e:CanForm}) are called \emph{frequencies} 
of the corresponding exponential polynomial $D_Y (\cdot)$.
Similarly, $V_\si (Y)$ is the frequency of the term (\ref{e:Dterms}) in the Leibniz formula.
By (\ref{e:V(Y)=max}), there exists a term of the form (\ref{e:Dterms}) that has $\V (Y)$ as its frequency.
In the process of summation of the terms (\ref{e:Dterms}) in the Leibniz formula some of the 
terms  may cancel so that, 
for a certain permutation $\si \in S_N$, $V_\si (Y)$ is not a frequency of $D_Y (\cdot)$.
If this is the case, we say that there is \emph{frequency cancellation} for the frequency $V_\si (Y)$.
An example of frequency cancellation for the highest possible frequency $\V (Y)$ 
have been constructed in \cite{LL17} to prove that non-Weyl asymptotics 
is possible for $H_{a,Y}$.

By $\G_\si$ we denote the metric pseudograph (i.e., 
the undirected metric graph with possible degenerate loops and 
multiple edges) that is produced from the directed 
graph $\overrightarrow{\G_\si}$ by stripping off the direction for all bonds. So if $\si(j) = j$, 
$\G_\si$ contains the loop-edge $y_j \sim y_j$ of zero length. If $\si(j) \neq  j$ and $\si(\si(j))=j$, 
$\G_\si$ contains two identical edges $y_j \sim y_{\si(j)}$ each of 
them contributing to the metrical length $\V_\si (Y)$ of $\G_\si$ (that is the multiplicity of 
the edge $y_j \sim y_{\si(j)}$ is 2). These two cases describe all `nonstandard`
situations where the multiplicity of an edge is strictly larger than $1$, or 
a loop can appear in $\G_\si$. That is, if $\si(j) \neq j \neq \si(\si(j))$, then $\G_\si$ 
has exactly two edges involving $y_j$, namely, $y_j \sim y_{\si^{\pm1} (j)}$, 
which are distinct and of multiplicity 1.

\begin{defn} \label{d:BondEq}
We will say that two permutations $\si$ and $\si'$ are \emph{edge-equivalent} and write 
$\si \cong \si'$
if $\G_{\si} = \G_{\si'}$,.
\end{defn} 

Here the equality $\G_{\si} = \G_{\si'}$ is understood in the following sense:
for any $j,j' \in [1,N] \cap \NN$,
\begin{gather} \label{e:G=G}
\text{the multiplicities of the edge
$y_j \sim y_{j'}$ in the graphs $\G_\si $ and $\G_{\si'}$ coincide.}
\end{gather}

It is easy to see that $\si \cong  \si'$ exactly when the cycle decomposition 
of $\si'$ can be obtained from that of $\si$ by inversion of some of the cycles, i.e.,
for $\si$ with the cycle decomposition (\ref{e:si=cyc}), 
the edge-equivalence class of $\si$ consists of permutations of the form 
$\prod_{m=1}^{M(\si)} \cyc_m^{\alpha_m} $, where each $\alpha_m$ takes either the value $1$,
or $(-1)$.

From (\ref{e:ep=cyc}) and (\ref{e:VsiY}) we see that, 
\begin{gather} \label{e:ep=epV=V}
\text{if $ \si \cong \si'$, then  $\ep_\si  = \ep_{\si'} $ and $V_\si (Y) = V_{\si'} (Y)$.}
\end{gather}

The reason for the introduction of the edge-equivalence is the following statement.

\begin{prop} \label{p:WeylAsV<>V}
Assume that $Y$ is such that the following assumption hold:
\begin{gather} \label{a:V}
\text{$V_\si (Y) = V_{\si'} (Y)$ only if $\si \cong \si'$}.
\end{gather}
Then the Weyl-type asymptotics takes place, i.e.,  $\W (H_{a,Y}) = \V (Y)$.
\end{prop}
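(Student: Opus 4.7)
The plan is to expand $D_Y(z)$ by Leibniz's formula, group the resulting exponential-polynomial terms according to their frequency, and then read off $b_\nu$ as the largest surviving frequency. By (\ref{e:V(Y)=max}) we already know that every $V_\si(Y)$ is bounded by $\V(Y)$ and that this maximum is attained by some $\si^{*}\in S_N$, so the only point to verify is that the coefficient in front of $e^{\ii z \V(Y)}$ does not vanish after cancellation.

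Under hypothesis (\ref{a:V}), the permutations producing a given frequency are precisely the members of a single edge-equivalence class. I would therefore fix $\si^{*}$ with $V_{\si^{*}}(Y)=\V(Y)$, consider its equivalence class $[\si^{*}]$, and compute
\[
\sum_{\si' \in [\si^{*}]} e^{\ii z V_{\si'}(Y)} P^{[\si',Y]}(z)
= e^{\ii z \V(Y)} \sum_{\si' \in [\si^{*}]} P^{[\si',Y]}(z).
\]
The second observation to establish is that $P^{[\si',Y]}(\cdot)=P^{[\si^{*},Y]}(\cdot)$ for every $\si'\in[\si^{*}]$. Inverting a cycle $\cyc_m$ in the decomposition (\ref{e:si=cyc}) leaves the underlying set of elements untouched, so the fixed-point set of $\si'$ coincides with that of $\si^{*}$, and hence the factor $\prod_{j:\si(j)=j}(\ii z-4\pi a_j)$ in (\ref{e:p si}) is the same; inverting a cycle also leaves the collection of edge-lengths $|y_j-y_{\si(j)}|$ invariant as a multiset, so $K_1(\si',Y)=K_1(\si^{*},Y)$; and by (\ref{e:ep=epV=V}) the signs agree. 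Therefore the sum above equals $\#[\si^{*}]\cdot P^{[\si^{*},Y]}(z)$.

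It remains only to notice that $P^{[\si^{*},Y]}(\cdot)$ is a nonzero polynomial: its leading coefficient is $(-1)^{\ep_{\si^{*}}} K_1(\si^{*},Y)(\ii)^{F(\si^{*})}$, where $F(\si^{*})$ is the number of fixed points of $\si^{*}$, and $K_1(\si^{*},Y)$ is a product of reciprocals of distances between distinct centers, hence strictly positive. Consequently, after grouping all terms in the Leibniz expansion of $D_Y(z)$ by frequency, the term attached to $e^{\ii z\V(Y)}$ survives as the nontrivial polynomial $\#[\si^{*}]\cdot P^{[\si^{*},Y]}(z)$, so $\V(Y)$ appears as a frequency in the unique canonical representation (\ref{e:CanForm}) of $D_Y(\cdot)$. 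Since no term of the Leibniz sum has frequency exceeding $\V(Y)$, this forces $b_\nu=\V(Y)$, and by (\ref{e:bnu=W}) we conclude $\W(H_{a,Y})=\V(Y)$.

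The only delicate point in this plan is the invariance of $P^{[\si,Y]}$ along an edge-equivalence class; once that is in place the argument is purely bookkeeping, because the hypothesis (\ref{a:V}) is exactly what is needed to guarantee that different classes cannot interfere with one another and cancel the leading frequency.
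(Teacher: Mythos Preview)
Your argument is correct and follows exactly the route the paper takes: group the Leibniz terms by frequency, use (\ref{a:V}) to see that all contributions at frequency $\V(Y)$ come from a single edge-equivalence class, and then invoke (\ref{e:ep=epV=V}) together with the explicit form (\ref{e:p si}) to conclude that these contributions add up rather than cancel. The paper compresses this into a one-sentence sketch, while you have spelled out the details (equality of fixed-point sets, of $K_1$, and of signs across an edge-equivalence class), but the substance is the same.
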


\begin{proof}
Under condition (\ref{a:V}), it is clear from (\ref{e:ep=epV=V}) and the form (\ref{e:p si}) 
of the polynomials $P^{[\si,Y]} (\cdot)$, that in the process of summation of (\ref{e:Dterms}) 
by the Leibniz formula the terms 
with the frequency $\V (Y)$ cannot cancel each other in $D_Y (\cdot)$. Thus, $\V (Y) = b_\nu = \W (H_{a,Y}) $ 
(see (\ref{e:Nas}), (\ref{e:bnu=W})).
\end{proof}

\begin{thm} \label{t:V=V}
Let $\si, \si' \in S_N$. Then the following statements are equivalent:
\item[(i)] $\si \cong \si'$;
\item[(ii)] $\V_\si (Y) = \V_{\si'} (Y)$ for all $Y \in \AAA$;
\item[(iii)] $\V_\si (Y) = \V_{\si'} (Y)$ for all $Y$ in a certain open ball $\BB_{\de_0} (Y_0)$ 
of the metric space $\AAA$,
where $\de_0>0$ and $Y_0 \in \AAA$.
\end{thm}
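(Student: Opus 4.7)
The plan is to close the loop (iii) $\Rightarrow$ (i) $\Rightarrow$ (ii) $\Rightarrow$ (iii): the implication (i) $\Rightarrow$ (ii) is exactly (\ref{e:ep=epV=V}) and (ii) $\Rightarrow$ (iii) is trivial, so the substance is (iii) $\Rightarrow$ (i). First I would upgrade (iii) to (ii) by real-analytic continuation. Each summand $|y_j - y_{\si(j)}|$ is real-analytic on the open set $\{y_j \neq y_{\si(j)}\}$, so $\V_\si - \V_{\si'}$ is real-analytic on $\AAA$. The set $\AAA$ is the complement in $(\RR^3)^N$ of a finite union of codimension-$3$ linear subspaces and is therefore path-connected, so the identity theorem promotes the vanishing of $\V_\si - \V_{\si'}$ on $\BB_{\de_0}(Y_0)$ to its vanishing on all of $\AAA$.

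Assume now $\V_\si(Y) = \V_{\si'}(Y)$ for every $Y \in \AAA$. For any unordered pair $\{j, j'\}$ with $j \neq j'$, let $m_\si(\{j, j'\})$ denote the multiplicity of the edge $y_j \sim y_{j'}$ in the pseudograph $\G_\si$; loops have length zero, so
\begin{equation*}
\V_\si(Y) = \sum_{1 \leq j < j' \leq N} m_\si(\{j, j'\}) \, |y_j - y_{j'}|.
\end{equation*}
Setting $c_{j, j'} := m_\si(\{j, j'\}) - m_{\si'}(\{j, j'\})$, the task reduces to showing that
\begin{equation*}
\sum_{1 \leq j < j' \leq N} c_{j, j'} \, |y_j - y_{j'}| = 0, \qquad Y \in \AAA,
\end{equation*}
forces every $c_{j, j'}$ to vanish. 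Here I would exploit the three-dimensional identity $\De_{y_k} |y_k - y_j| = 2/|y_k - y_j|$ (valid for $y_k \neq y_j$). Applying $\De_{y_k}$ to the preceding display kills every term independent of $y_k$ and yields
\begin{equation*}
\sum_{j \neq k} \frac{c_{kj}}{|y_k - y_j|} = 0 \quad \text{on } \AAA,
\end{equation*}
with $c_{kj} := c_{\min(k,j), \max(k,j)}$. For any fixed $j_0 \neq k$, freeze the centers $y_j$ with $j \neq k$ at mutually distinct positions, multiply the identity by $|y_k - y_{j_0}|$, and let $y_k \to y_{j_0}$ (avoiding the remaining centers); the $j \neq j_0$ terms vanish in the limit, giving $c_{k j_0} = 0$. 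Ranging over $k$ and $j_0$ kills every $c_{j, j'}$, so $m_\si$ and $m_{\si'}$ agree on all non-loop edges.

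Agreement on loops is then automatic: at each vertex $j$ the total non-loop degree $\sum_{j' \neq j} m_\si(\{j, j'\})$ equals $2$ when $\si(j) \neq j$ and $0$ when $\si(j) = j$; equality of the non-loop multiplicities forces these counts to match, so $\si(j) = j \Leftrightarrow \si'(j) = j$, and the loops of $\G_\si$ and $\G_{\si'}$ coincide. Hence $\G_\si = \G_{\si'}$, i.e., $\si \cong \si'$. I expect the main obstacle to be the linear-independence step: the Newtonian-potential identity $\De|x| = 2/|x|$ is what makes it clean, and it is precisely the $3$-dimensional structure of $\RR^3$ that is being used here; the rest of the argument is a routine connectedness-plus-identity-theorem package together with a degree count.
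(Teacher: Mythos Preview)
Your proof is correct, and the step (iii) $\Rightarrow$ (ii) via real-analyticity and connectedness of $\AAA$ matches the paper's lemma essentially verbatim. The genuine difference is in (ii) $\Rightarrow$ (i).

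The paper argues by contraposition and case analysis: assuming $\si \not\cong \si'$, it locates a vertex $y_{j_*}$ whose incident edges differ in $\G_\si$ and $\G_{\si'}$, breaks the possible local configurations into six cases (a)--(f), and in each case exhibits an explicit one-parameter family $Y(t)$ (moving only $y_{j_*}$, typically along a segment or circle) for which $\V_\si(Y(t)) - \V_{\si'}(Y(t))$ is strictly monotone or nonconstant. Your route replaces this case-by-case construction with a single linear-independence argument: write $\V_\si - \V_{\si'}$ as $\sum c_{j,j'} |y_j - y_{j'}|$, apply $\De_{y_k}$ to convert each distance into its singular Newtonian counterpart $2/|y_k - y_j|$, and then isolate the coefficients by a collision limit $y_k \to y_{j_0}$. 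The degree count to recover the loops is the same in spirit as the paper's case (a).

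Your argument is shorter and avoids the combinatorial enumeration; the paper's version is more elementary in that it never differentiates and works directly with the distance functions. One small remark: the identity $\De_x |x| = (d-1)/|x|$ holds and is nonzero for every $d \ge 2$, so your method is not genuinely tied to $d=3$; only $d=1$ would break it.
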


\subsection{Proof of Theorem \ref{t:V=V}}

The implications (i) $\implies$ (ii)  $\implies$ (iii) are obvious.

\emph{Let us now prove (iii) $\implies$ (ii).} 

\begin{lem}
Let the closed segment $[Y_0,Y_1] = \{ Y(t)=(1-t) Y_0 + t Y_1 \ : \ t \in [0,1]\}$ belong to the set 
$\AAA$ and $Y_0 \neq Y_1 $.
Let $Y_0$ satisfy statement (iii) of Theorem \ref{t:V=V}. Then:
\item[(1)]  $\V_\si (Y) = \V_{\si'} (Y)$ for all $Y \in [Y_0,Y_1]$;
\item[(2)] $Y_1$ satisfies statement (iii) of Theorem \ref{t:V=V} in the sense that 
$\V_\si (Y) = \V_{\si'} (Y)$ for all $Y \in \BB_{\de_1} (Y_1)$ with a certain $\de_1>0$.
\item[(3)] statement (ii) of Theorem \ref{t:V=V} is satisfied, i.e., $\V_\si (Y) = \V_{\si'} (Y)$ 
for all $Y \in \AAA$.
\end{lem}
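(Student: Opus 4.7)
The key fact is that $\V_\si - \V_{\si'}$ is real-analytic on the open set $\AAA$: each summand $|y_j - y_{\si (j)}|$ is real-analytic wherever $y_j \neq y_{\si (j)}$, which holds on $\AAA$ when $\si (j) \neq j$, and the terms with $\si (j)=j$ vanish identically. So the proof amounts to propagating the vanishing of a real-analytic function from a small ball through the chain (1) $\Rightarrow$ (2) $\Rightarrow$ (3) to the whole of $\AAA$.

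For part (1), I would parametrize $Y(t) := (1-t) Y_0 + t Y_1$ for $t \in [0,1]$ and study $g(t) := \V_\si (Y(t)) - \V_{\si'} (Y(t))$. Each squared distance $|Y(t)_j - Y(t)_{\si(j)}|^2$ is a polynomial in $t$ that is either strictly positive for all $t \in [0,1]$ (since $Y(t) \in \AAA$) or identically zero (the case $\si(j)=j$). Hence each $|Y(t)_j - Y(t)_{\si(j)}|$ is real-analytic on an open neighborhood of $[0,1]$, and so is $g$. Since $g$ vanishes on the nonempty initial subinterval where $Y(t) \in \BB_{\de_0} (Y_0)$, the one-variable identity theorem for real-analytic functions forces $g \equiv 0$ on $[0,1]$.

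For part (2), I would use that the compact segment $[Y_0, Y_1]$ has positive distance $r$ to the closed set $(\RR^3)^N \setminus \AAA$. Choosing $\de_1 \in (0, \min\{r, |Y_1-Y_0|_2\})$ ensures both that $Y_0 \notin \BB_{\de_1} (Y_1)$ and that, for every $Y_1' \in \BB_{\de_1} (Y_1)$, each point of the perturbed segment $[Y_0, Y_1']$ lies within distance $\de_1 < r$ of the corresponding point of $[Y_0, Y_1]$, hence in $\AAA$. Applying part (1) to the segment $[Y_0, Y_1']$ yields $\V_\si (Y_1') = \V_{\si'} (Y_1')$ for every $Y_1' \in \BB_{\de_1} (Y_1)$, which is precisely statement (iii) at $Y_1$.

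For part (3), I would invoke that $\AAA$ is polygonally path-connected, since the ``big diagonal'' $\bigcup_{j\ne j'} \{ y_j = y_{j'} \}$ is a finite union of real-codimension-$3$ linear subspaces of $\RR^{3N}$, whose complement is open and connected. Thus for arbitrary $Y_* \in \AAA$ there exists a polygonal path $Y_0 = Z_0, Z_1, \dots, Z_k = Y_*$ with each $[Z_i, Z_{i+1}] \subset \AAA$. Starting from the open-ball equality (iii) at $Z_0 = Y_0$ and iterating part (2), one obtains (iii) at each $Z_i$, and in particular the pointwise equality $\V_\si (Y_*) = \V_{\si'} (Y_*)$. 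The most delicate step is part (2): the key is the compactness-based uniform estimate that allows the entire segment to be perturbed without leaving $\AAA$.
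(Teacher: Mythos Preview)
Your proposal is correct and follows essentially the same approach as the paper: real-analyticity of $t\mapsto \V_\si(Y(t))-\V_{\si'}(Y(t))$ together with the identity theorem for part (1), a uniform-distance argument allowing the endpoint $Y_1$ to be perturbed while keeping $[Y_0,Y_1']\subset\AAA$ for part (2), and polygonal path-connectedness of $\AAA$ for part (3). The only cosmetic difference is that the paper passes to a complex parameter $t$ before invoking analytic continuation, whereas you appeal directly to the real-analytic identity theorem; both are equivalent here.
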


\begin{proof}
Let $\de_2 := \min \{ | y- Y(t) |_2 \ : \ y \in \Bd \AAA, \  t \in [0,1] \} $, where 
\[
\Bd \AAA := \{ y \in (\RR^3)^N \ : \ y_{j} = y_{j'} \ \text{ for a certain } \ j \neq j' \}
\]
 is the boundary of the set $\AAA$ in the normed space  $(\RR^3)^N$. 
 Since $[Y_0,Y_1] \subset \AAA$ and $\AAA$ is open in $(\RR^3)^N$,
 we see that $\de_2>0$.
 
\emph{(1)} Consider the function $f(t) = \V_\si (Y(t)) - \V_{\si'} (Y(t))$ for 
$t \in (-\de_3,1+\de_3)$, 
where $ \de_3 := \de_2 / |Y_1 - Y_0|_2$. 

It follows from (\ref{e:VsiY}) that $f(\cdot)$  is analytic in 
the interval $(-\de_3,1+\de_3)$. Indeed, 
$|y_j  - y_{j'} |= \left(\sum_{m=1}^3 (y_{j,m} - y_{j',m})^2 \right)^{1/2}$, where $y_{j,m}$ and $y_{j',m}$, $m=1,2,3$,
are the $\RR^3$-coordinates of $y_j$ and $y_{j'}$, respectively.
Since $Y(t) \in \AAA$ for this range of $t$, the sum cannot be $0$ for $j \neq j'$, and 
we see that, for $(y_j (t))_{j=1}^N = Y(t)$, each $|y_j (t) - y_{j'} (t)|$ 
is a composition of  functions which are analytic in $t$.

Hence, we can consider a complex $t$ and extend $f(\cdot)$ as an analytic function in a neighborhood of 
$(-\de_3,1+\de_3)$ in the complex plane $\CC$.  
Since $Y_0$ satisfies (iii), we have $f(t) = 0 $ in a neighborhood of $0$. 
Due to analyticity, $f(t) = 0 $ for all $t \in (-\de_3,1+\de_3)$.
This proves (1).

\emph{(2)} It follows from the definition of $\de_2$ that 
$[Y_0 , Y] \subset \AAA$ for every $Y \in \BB_{\de_2} (Y_1) $. 
So statement  (1) of the lemma can be applied to each of these segments. This gives (2).

\emph{(3)} follows from statement (2) and the fact that $\AAA$ is 
piecewise linear path connected in 
$(\RR^3)^N$.
\end{proof}

\emph{Let us prove (ii) $\implies$ (i).} Assume that $\si \not \cong \si'$. 
Then, by Definition \ref{d:BondEq} and (\ref{e:G=G}), there exists a center
$y_{j_*}$ such that the sets of centers which are connected with $y_{j_*}$ 
in the graphs $\G_\si$ and $\G_{\si'}$ do not coincide.
This can happen in several situations, which, by a possible exchange of roles between $\si$ and $\si'$, 
can be reduced to the 
following six cases:
\begin{itemize}
\item[(a)] $\si (j_*) = j_*$ and $ \si' (j_*) \neq j_*$.
\item[(b)] Both graphs $\G_\si$ and $\G_{\si'}$ have the common nondegenerate edge $y_{j_*} \sim y_{j_1} $ of multiplicity one, 
but the 
second edge involving $y_{j_*}$ in $\G_\si$ and $\G_{\si'}$ do not coincide. 
To be specific let us assume that $\G_{\si}$ and $\G_{\si'}$ have the edges 
$ y_{j_*} \sim y_j $ and 
$y_{j_*} \sim y_{j'}$, resp., and that $y_{j_*}$, $ y_{j_1}$, $y_j$, $y_{j'}$ are distinct centers.
\item[(c)] The graph $\G_\si$ has two edges $y_{j_*} \sim y_{j_m} $, $m=1,2$, 
the graph $\G_{\si'}$ has two edges $ y_{j_*} \sim y_{j'_m} $, $m=1,2$, and 
all the five centers $y_{j_*}$, $ y_{j_m}$, $y_{j'_m}$, $m=1,2$, are distinct.
\item[(d)] The graph $\G_\si$ has the edge $y_{j_*} \sim y_{j} $ of multiplicity 2,
the graph $\G_{\si'}$ has two edges $ y_{j_*} \sim y_{j'_m} $, $m=1,2$,
and the 4 centers $y_{j_*}$, $ y_{j}$, $y_{j'_1}$, $y_{j'_2}$ are distinct.
\item[(e)] The graph $\G_\si$ has the edge $y_{j_*} \sim y_{j_1} $ of multiplicity 2,
the graph $\G_{\si'}$ has two edges $ y_{j_*} \sim y_{j_1} $,
$ y_{j_*} \sim y_{j_2} $, 
and the 3 centers $y_{j_*}$, $ y_{j_1}$, $y_{j_2}$ are distinct.
\item[(f)] The graph $\G_\si$ has the edge $y_{j_*} \sim y_{j} $ of multiplicity 2,
the graph $\G_{\si'}$ has the edge $ y_{j_*} \sim y_{j'} $ of multiplicity 2,
and the 3 centers $y_{j_*}$, $ y_{j}$, $y_{j'}$ are distinct.
\end{itemize}
Let us show that, for each of the 6 above situations,  statement (ii) of Theorem \ref{t:V=V} does not hold true under the assumption 
that $\si \not \cong \si'$.

\emph{Case (a).} The function $\V_\si (Y)$ is obviously constant for all small changes of $y_{j_*}$ since this center
is connected only with itself in $\G_\si$. This is not true for $\V_{\si'} (Y)$ because in 
$\G_{\si'}$, $y_{j_*}$
is connected with at least one of the other centers. Hence, statement (ii) of Theorem \ref{t:V=V}
does not hold true.

\emph{Case (b).} Let us take for $ t \in (0,1) $, $Y(t) \in \AAA$ such that 
$y_{j_*} = (1-t) y_j + t y_{j'}$,
but all the centers except $y_{j_*}$ do not depend on $t$. 
Then as $t$ changes in $(0,1)$, the function $\V_\si (Y(t)) - \V_{\si'} (Y(t))$ is strictly increasing.
This contradicts  statement (ii) of Theorem \ref{t:V=V}.

\emph{Cases (c)-(f) can be treated by arguments similar to that 
of Case (b) with some modifications}, which we consider briefly below.

\emph{Case (c).} One can take $Y(t)$, $t\in (-1,1)$, such that for $m=1,2$, 
the $\RR^3$-coordinates of $y_{j_m}$ are $(-m,0,0)$, the $\RR^3$-coordinates of 
$y_{j'_m}$ are $(m,0,0)$, and put
$y_{j_*} = (t,0,0)$. Thus, $\V_\si (Y(t)) - \V_{\si'} (Y(t))$ is strictly increasing for $t\in (-1,1)$
and so the statement (ii) of Theorem \ref{t:V=V} does not hold.

\emph{Case (d).} In the graph $\G_\si$, the center $y_{j_*}$ is connected only with
$y_j$. That is why it is easy to construct the evolution $Y(t)$ of $Y$ in such a way 
that only $y_{j_*}$ moves, the distance $|y_{j_*} - y_{j}|$ is constant, but 
$\sum_{m=1,2} |y_{j_*} - y_{j'_m}|$ changes, 
and so also $\V_{\si'} (Y(t))$ changes contradicting 
the statement (ii) of Theorem \ref{t:V=V}. For example, let $y_{j'_m} = ((-1)^m,0,0)$
for $m=1,2$, $y_{j} = (0,1,0)$, and assume that $y_{j_*} (t)$ move along a circle of radius 1 in $Ox^1x^2$-plane.

\emph{Case (e).} It is enough to choose 
$y_{j_*} (t) = (1-t) y_{j_1} + t y_{j_2}$ for $t \in (0,1)$ 
with all other centers fixed, and then to follow arguments of Case (b).

\emph{Case (f).} It is enough to put  $y_{j_*} (t) = (1-t) y_{j} + t y_{j'}$ and use 
the arguments of Case (b).
This completes the proof of Theorem \ref{t:V=V}.

\subsection{Proof of Theorem \ref{t:main}} 
\label{ss:ProofMain}

Let us denote by $n \in \NN$ the number of 
edge-equivalence classes in $S_N$ and let us take one representative $\wt \si_j$, $j=1,\dots, n$, in each of them.
Let 
\[
\AAA_1 := \{ Y \in \AAA \ : \ \V_{\wt \si_j} (Y) \neq \V_{\wt \si_m} (Y) \text{  if } j \neq m \} .
\]

\begin{lem} \label{l:A1}
The set $\AAA_1$ is open and dense in the metric space $\AAA$.
\end{lem}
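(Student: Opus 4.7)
The plan is to write $\AAA_1$ as a finite intersection of open dense subsets of $\AAA$, so that both openness and denseness follow at once from elementary topology (no Baire category argument is needed for a finite intersection).

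For every ordered pair $(j,m)$ with $1 \le j < m \le n$ introduce the set
\[
\mathcal{B}_{j,m} := \{ Y \in \AAA \ : \ \V_{\wt\si_j}(Y) = \V_{\wt\si_m}(Y) \}
\quad \text{and} \quad
\mathcal{U}_{j,m} := \AAA \setminus \mathcal{B}_{j,m}.
\]
Since each map $Y \mapsto \V_{\wt\si_j}(Y)$ is continuous on $\AAA$ (it is a sum of Euclidean distances between components of $Y$), $\mathcal{B}_{j,m}$ is closed in $\AAA$ and $\mathcal{U}_{j,m}$ is open in $\AAA$. Then $\AAA_1 = \bigcap_{j<m} \mathcal{U}_{j,m}$ is a \emph{finite} intersection of open sets and hence open; this handles the first half of the lemma.

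For denseness I would argue that each $\mathcal{U}_{j,m}$ is already dense in $\AAA$. Indeed, because $\wt\si_j$ and $\wt\si_m$ belong to different edge-equivalence classes, $\wt\si_j \not\cong \wt\si_m$; by the contrapositive of the implication (iii) $\Longrightarrow$ (i) in Theorem \ref{t:V=V}, there is no open ball $\BB_{\de_0}(Y_0) \subset \AAA$ on which $\V_{\wt\si_j}$ and $\V_{\wt\si_m}$ coincide identically. Equivalently, the closed set $\mathcal{B}_{j,m}$ has empty interior in $\AAA$, so its complement $\mathcal{U}_{j,m}$ is dense.

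Finally, the finite intersection of open dense subsets of a topological space is open and dense: for any nonempty open $V \subset \AAA$, a short induction on the number of factors shows that $V \cap \bigcap_{j<m} \mathcal{U}_{j,m}$ is nonempty, because intersecting a nonempty open set with an open dense set yields a nonempty open set. Hence $\AAA_1$ is dense in $\AAA$, completing the proof. The only nontrivial ingredient is Theorem \ref{t:V=V}, which was already proved; there is no real obstacle beyond that, as the lemma is essentially a clean topological repackaging of the theorem.
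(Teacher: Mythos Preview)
Your proof is correct and follows essentially the same approach as the paper: you write $\AAA_1$ as the finite intersection of the open sets $\mathcal{U}_{j,m}$ (the paper's $\AAA^{j,m}$), use continuity of $\V_{\wt\si_j}$ for openness, and invoke Theorem~\ref{t:V=V} (specifically the contrapositive of (iii)$\Rightarrow$(i)) to show each $\mathcal{U}_{j,m}$ is dense. The only differences are cosmetic---you name the closed complements $\mathcal{B}_{j,m}$ explicitly and spell out why a finite intersection of open dense sets is dense.
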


\begin{proof}
Consider the sets 
\[
\AAA^{j,m} := \{ Y \in \AAA \ : \ \V_{\wt \si_j} (Y) \neq \V_{\wt \si_m} (Y) \} ,  
\quad j , m = 1,\dots, n.
\]
Since the functions $\V_{\wt \si_j} (\cdot)$ are continuous in $\AAA$, 
the sets $\AAA^{j,m}$ are open.

Let us show that $\AAA^{j,m}$ is dense in $\AAA$ whenever $j \neq m$. 
Assume ad absurdum that the converse is true. Then statement (iii)
of Theorem \ref{t:V=V} holds for $\wt \si_j$ and $\wt \si_m$. By Theorem \ref{t:V=V}, $\wt \si_j \cong \wt \si_m$.
This contradicts the choice of $\wt \si_j$ and $\wt \si_m$ as representative of different 
edge-equivalence classes.

We see that $\AAA_1 = \bigcap_{1 \le j < m \le n} \AAA^{j,m}$ is the intersection of a finite number 
of open dense sets. This completes the proof.
\end{proof}

Proposition \ref{p:WeylAsV<>V} shows that for each $Y \in \AAA_1$ and each $a \in \CC^N$ the Weyl-type 
asymptotics of $\N_{H_{a,Y}} (\cdot)$ takes place. This completes the proof of Theorem \ref{t:main}.

\vspace{3ex}
\noindent
\textbf{Acknowledgments.}\\ 
The authors are thankful to the anonymous referee for careful reading of the paper 
and valuable remarks that have improved the clarity of the proofs.
The authors are grateful to Pavel Exner and Vladimir Lotoreichik 
for stimulating discussions on resonances of  
quantum graphs and point interactions.
The second named author (IK) is grateful to 
Konstantin Pankrashkin, Olaf Post,  and Ralf Rueckriemen
for the invitation to the 3rd French-German meeting 
"Asymptotic analysis and spectral theory" (Trier, 25-29/09/2017), 
where some of the above discussions took place. 
The authors are grateful  to Gianfausto Dell'Antonio and Alessandro Michelangeli for the hospitality 
at SISSA during the workshop ``Trails in Quantum Mechanics and Surroundings''
in honour of the 85th birthday of Gianfausto Dell'Antonio
(Trieste, 29-30/01/2018) and at INdAM during the 3rd workshop ``Mathematical Challenges of Zero-Range Physics''
(Rome, 9-13/07/2018).
IK is grateful to Herbert Koch for discussions of geometric resonances 
and for the hospitality at the University of Bonn, to Jürgen Prestin for 
the hospitality at the University of Lübeck, and to Dirk Langemann for the hospitality at 
TU Braunschweig.

During various parts of this research, IK was supported by the Alexander von Humboldt 
Foundation, by VolkswagenStiftung project “Modeling, Analysis, and Approximation 
Theory toward applications in tomography and inverse problems”,
and by the  WTZ grant 100320049 ''Mathematical Models for Bio-Medical Problems'' 
jointly sponsored by BMBF (Germany) and MES (Ukraine).

\end{document}